\newcommand{\Q}{\mathbb Q}
\newcommand{\N}{\mathbb N}
\newcommand{\Z}{\mathbb Z}
\newcommand{\R}{\mathbb R}
\newcommand{\fa}{{\mathfrak a}}
\newcommand{\fp}{{\mathfrak p}}
\newcommand{\eps}{\varepsilon}
\newcommand{\cok}{\text{cok}\,}
\newcommand{\Gal}{\operatorname{Gal}}
\newcommand{\too}{\longrightarrow}
\newcommand{\Cl}{\text{Cl}}
\newcommand{\disc}{\text{disc }}
\newcommand{\OO}{\mathcal O}
\newtheorem{thm}{Theorem}
\newtheorem{prop}{Proposition}
\newtheorem{lem}{Lemma}
\newtheorem{cor}{Corollary}
\title{Ideal class groups of cyclotomic number fields I}
\author{Franz Lemmermeyer} 			
\address{M\"orikeweg 1 \\ 73480 Jagstzell \\ Germany}
\email{hb3@ix.urz.uni-heidelberg.de}
\keywords{Ideal class group, cyclotomic fields, unit index,  
          class number formula}
\subjclass{Primary 11 R 18; Secondary 11 R 29}
\begin{document}

\begin{abstract}
Following Hasse's example, various authors have been deriving 
divisibility properties of minus class numbers of cyclotomic 
fields by carefully examining the analytic class number formula. 
In this paper we will show how to generalize these results to
CM-fields by using class field theory. Although we will only need 
some special cases, we have also decided to include a few results 
on Hasse's unit index for CM-fields as well, because it seems
that our proofs are more direct than those in Hasse's book \cite{H}.
\end{abstract}

\maketitle

\section{Notation}  

Let $K \subset L$ be number fields; we will use the following notation:
\begin{itemize}
\item $\OO_K$ is the ring of integers of $K$;
\item $E_K$ is its group of units;
\item $W_K$ is the group of roots of unity contained in $K$;
\item $w^{}_K$ is the order of $W^{}_K$;
\item $\Cl(K)$ is the ideal class group of $K$;
\item $[\fa]$ is the ideal class generated by the ideal $\fa$;
\item $K^1$ denotes the Hilbert class field of $K$, that is the 
         maximal abelian extension of $K$ that is unramified at 
         all places;
\item $j^{}_{K \to L}$ denotes the transfer of ideal classes for number 
        fields $K \subset L$, i.e. the homomorphism $\Cl(K) \to \Cl(L)$ 
        induced by mapping an ideal $\mathfrak a$ to
         $\mathfrak a \mathfrak O_L$;
\item $\kappa_{L/K}$ denotes the capitulation kernel 
        $\ker \, j^{}_{K \to L}$;
\end{itemize}
Now let $K$ be a CM-field, i.e. a totally complex quadratic extension 
of a totally real number field; the following definitions are
standard:

\begin{itemize}
\item $\sigma$ is complex conjugation;
\item $K^+$ denotes the maximal real subfield of $K$; this is
      the subfield fixed by $\sigma$.
\item $\Cl^-(K)$ is the kernel of the map 
      $N_{K/K^+}: \Cl(K) \to \Cl(K^+)$ and is called the minus class 
      group; 
\item $h^-(K)$ is the order of $\Cl^-(K)$, the minus class number;
\item $Q(K)$ $=(E_K:W^{}_KE_{K^+}) \in \{ 1, 2 \}$ is Hasse's unit index.
\end{itemize}

We will need a well known result from class field theory. 
Assume that $K \subset L$ are CM-fields; then 
$\ker(N_{L/K}:\Cl(L) \to \Cl(K))$ has order $(L \cap K^1 : K)$. 
Since $K/K^+$ is ramified at the infinite places, the norm 
$N_{K/K^+}: \Cl(K) \to \Cl(K^+)$ is onto. 

\section{Hasse's unit index} 

Hasse's book \cite{H} contains numerous theorems (S\"atze 
14 -- 29) concerning the unit index $Q(L) = (E_L:W_LE_K)$, 
where $K=L^+$ is the maximal real subfield of a cyclotomic 
number field $L$. Hasse considered only abelian number fields 
$L/\Q$, hence he was able to describe these fields in terms
of their character groups $X(L)$; as we are interested in results 
on general CM-fields, we have to proceed in a different manner. 
But first we will collect some of the most elementary properties of 
$Q(L)$ (see also \cite{H} and \cite{W}; a reference "Satz $*$" 
always refers to Hasse's book \cite{H}) in

\begin{prop}\label{P1}
Let $K \subset L$ be CM-fields; then
\begin{enumerate}
\item[a)] (Satz 14) $Q(L) = (E_L:W_LE_{L^+}) = 
	(E_L^{\sigma-1}:W_L^2) = (E_L^{\sigma+1} : E_{L^+}^2)$; 
	in particular, $Q(L) \in \{1,\,2\}$.
\item[b)] (Satz 16, 17) if $Q(L) = 2$ then  $\kappa_{L/L^+} = 1$;
\item[c)] (Satz 25) If $L^+$ contains units with any given signature,
	then $Q(L) = 1$. 
\item[d)] (Satz 29) $Q(K)|Q(L)\cdot (W_L:W^{}_K)$; 
\item[e)] (compare Satz 26) Suppose that 
        $N_{L/K}: \, W_L/W_L^2 \to W^{}_K/W_K^2$ is onto. 
	Then $Q(L) \mid Q(K)$.
\item[f)] (\cite[Lemma 2]{HY}) If $(L:K)$ is odd, then $Q(L) = Q(K)$;
\item[g)] (Satz 27) If $L = \Q(\zeta_m)$, where 
	$m \not\equiv 2 \bmod 4$ is composite, then $Q(L) = 2$;
\item[h)] (see Example 4 below) Let $K_1 \subseteq \Q(\zeta_m)$ and 
   $K_2 \subseteq \Q(\zeta_n)$ be abelian CM-fields, where $m=p^\mu$ 
   and $n=q^\nu$ are prime powers such that $p \ne q$, and let
   $K = K_1K_2$; then $Q(K) = 2$. 
\end{enumerate}
\end{prop}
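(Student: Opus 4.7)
Set $L := \Q(\zeta_{mn}) = \Q(\zeta_m)\Q(\zeta_n)$, which contains $K$. The plan is to combine parts (g) and (e) to reduce to the full cyclotomic case. Since $K_1$ and $K_2$ are CM, neither $\Q(\zeta_m)$ nor $\Q(\zeta_n)$ can equal $\Q$, so $m, n \geq 3$; moreover, if either prime equals $2$ the corresponding exponent is at least $2$. Hence $mn$ is composite with $mn \not\equiv 2 \pmod 4$, and part (g) yields $Q(L) = 2$. Part (e) then gives $Q(K) = 2$ as soon as one verifies that $N_{L/K}: W_L/W_L^2 \to W_K/W_K^2$ is surjective.

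Both quotients have order two, so I must exhibit one $\zeta \in W_L$ with $N_{L/K}(\zeta) \notin W_K^2$. The decomposition $\Gal(L/K) = H_1 \times H_2$, with $H_i := \Gal(\Q(\zeta_{p_i^{\mu_i}})/K_i)$, allows explicit computation: $N_{L/K}(\zeta_{p_i^{\mu_i}}) = \zeta_{p_i^{\mu_i}}^{|H_j|\, s_i}$ with $s_i := \sum_{h \in H_i} h$ and $\{i,j\} = \{1,2\}$. When both $p$ and $q$ are odd, each $H_i$ sits inside a cyclic group of even order and avoids its unique element of order two, so $|H_i|$ is odd. Then $[L:K] = |H_1||H_2|$ is odd and $N_{L/K}(-1) = -1$, which is a non-square in $W_K$ since the $2$-part of $W_K$ reduces to $\{\pm 1\}$. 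Surjectivity follows.

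When one prime equals $2$, say $p = 2$, the non-trivial class of $W_L/W_L^2$ is represented by $\zeta_{2^\mu}$ rather than $-1$, and the argument above fails because $[L:K]$ may now be even. Instead, let $c$ be the largest integer with $\zeta_{2^c} \in K_1$; this condition is equivalent to $H_1 \subseteq (1 + 2^c\Z)/(1 + 2^\mu\Z)$ with $c$ maximal. Since $|H_2|$ is odd ($q$ is odd), it suffices to show that $v_2(s_1) = \mu - c$, for then $N_{L/K}(\zeta_{2^\mu})$ has order exactly $2^c$, matching the $2$-part of $|W_K|$ and thereby lying outside $W_K^2$. For $c \geq 2$ the ambient group $(1 + 2^c\Z)/(1 + 2^\mu\Z)$ is cyclic of order $2^{\mu-c}$, so maximality of $c$ forces $H_1$ to equal this whole subgroup, and an explicit evaluation of $\sum_{j=0}^{2^{\mu-c}-1} (1 + 2^c j)$ modulo $2^\mu$ yields the claim. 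The residual case $c = 1$ is handled by direct inspection of the few subgroups of $(\Z/2^\mu)^\times$ containing an element $\equiv 3 \pmod 4$ but not containing $-1$. This 2-adic valuation computation is the main technical obstacle; everything else is structural.
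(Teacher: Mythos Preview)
Your overall strategy --- establish $Q(\Q(\zeta_{mn}))=2$ via (g) and then descend to $K=K_1K_2$ via (e) by checking that $N_{L/K}$ is surjective on $W_L/W_L^2$ --- is the same blueprint the paper follows, and your odd--odd case is fine (indeed it is just (f) unpacked). The divergence is in the $p=2$ case, and there your argument has a gap.

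When $c=1$ (i.e.\ $i\notin K_1$) you write that the claim ``is handled by direct inspection of the few subgroups of $(\Z/2^\mu\Z)^\times$ containing an element $\equiv 3\pmod 4$ but not containing $-1$.'' But $\mu$ is not bounded: for each $\mu\ge 3$ these subgroups are exactly the cyclic groups $H_1=\langle -5^{\,2^{r-1}}\rangle$ for $r=1,\dots,\mu-2$, so there are $\mu-2$ of them, and no finite inspection covers all $\mu$ at once. What is actually needed is a uniform argument that $v_2(s_1\bmod 2^\mu)=\mu-1$ for every such $H_1$. This can be done (e.g.\ write $s_1=\sum_{k=0}^{N-1}(-g)^k$ with $g=5^{2^{r-1}}$, $N=2^{\mu-1-r}$, and use $v_2(g^N-1)=\mu$, $v_2(1+g)=1$), but you have not supplied it, and ``direct inspection'' does not substitute for it.

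The paper sidesteps this computation entirely. Instead of descending in one jump from $\Q(\zeta_{mn})$ to $K$, it first observes that any complex subfield of $\Q(\zeta_{2^\mu})$ containing $i$ is itself a full $\Q(\zeta_{2^\alpha})$; so in the case $i\in K_1$ (your $c\ge 2$) one has $K_1=\Q(\zeta_{2^\alpha})$ and the index $[\Q(\zeta_{2^\alpha n}):K]$ is odd, whence (f) applies directly --- no sum $s_1$ needs to be evaluated. In the case $i\notin K_1$ (your $c=1$), the paper passes to the intermediate field $\widetilde K_1K_2$ with $\widetilde K_1=K_1(i)=\Q(\zeta_{2^\alpha})$, which has $Q=2$ by the previous case, and then applies (e) to the \emph{quadratic} step $\widetilde K_1K_2/K_1K_2$: the single nontrivial automorphism sends $\zeta_{2^\alpha}\mapsto -\zeta_{2^\alpha}^{-1}$, so $N(\zeta_{2^\alpha})=-1$, which generates $W_{K_1}/W_{K_1}^2$. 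This two--step descent replaces your $2$--adic valuation computation by a one--line norm evaluation.
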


The proofs are straight forward:
\begin{enumerate}
\item[a)] The map $\eps \too \eps^{\sigma-1}$ induces an 
  epimorphism $E_L \longrightarrow E_L^{\sigma-1}/W_L^2$.
  If $\eps^{\sigma-1} = \zeta^2$ for some $\zeta \in W_L$, then
  $(\zeta\eps)^{\sigma-1} = 1$, and $\zeta\eps \in E_{L^+}$.
  This shows that $\sigma-1$ gives rise to an isomorphism
  $E_L/W_LE_{L^+} \longrightarrow E_L^{\sigma-1}/W_L^2,$
  hence we have $(E_L:W_LE_{L^+}) = (E_L^{\sigma-1}:W_L^2)$. 
  The other claim is proved similarly.
\item[b)] Since $W_L/W_L^2$ is cyclic of order $2$, the first claim
  follows immediately from a). Now let $\mathfrak a$ be an ideal in $\OO_K$ 
  such that $\mathfrak a\OO_L = \alpha\OO_L$. Then $\alpha^{\sigma-1} = 
  \zeta$ for some root of unity $\zeta \in L$, and  $Q(L)=2$ shows that
  $\zeta = \varepsilon^{\sigma-1}$ for some $\varepsilon \in E_L$.
  Now $\alpha\varepsilon^{-1}$ generates $\mathfrak a$ and is 
  fixed by $\sigma$, hence lies in $K$. This shows that $\mathfrak a$
  is principal in $K$, i.e. that $\kappa_{L/L^+} = 1$.
\item[c)] Units in $L^+$ that are norms from $L$ are totally
  positive; our assumption implies that totally positive units are
  squares, hence we get $E_L^{\sigma+1} = E_{L^+}^2$, and our
  claim follows from a).
\item[d)] First note that $(W_L:W^{}_K)=(W_L^2:W_K^2)$; then 
  \begin{eqnarray*} 
  Q(L)\cdot (W_L^{}:W^{}_K) & = & (E_L^{\sigma-1}:W_L^2)(W_L^2:W_K^2) \\
     & = & (E_L^{\sigma-1}:E_K^{\sigma-1})(E_K^{\sigma-1}:W_K^2) \\
     & = & (E_L^{\sigma-1}:E_K^{\sigma-1}) \cdot Q(K) \end{eqnarray*}
  proves the claim.
\item[e)] Since $Q(L) = 2$, there is a unit $\varepsilon \in E_L$ such 
  that $\varepsilon^{\sigma-1} = \zeta$ generates $W_L/W_L^2$. Taking 
  the norm to $K$ shows that $(N_{L/K}\varepsilon)^{\sigma-1} = 
  N_{L/K}(\zeta)$ generates $W^{}_K/W_K^2$, i.e. we have $Q(K)= 2$.
\item[f)] If $(L:K)$ is odd, then $(W_L:W_K)$ is odd, too, and we get
  $Q(K) \mid Q(L)$ from d) and $Q(L) \mid Q(K)$ from e). 
\item[g)] In this case, $1-\zeta_m$ is a unit, and we find
  $(1-\zeta_m)^{1-\sigma} = -\zeta_m$.  Since $-\zeta_m \in W_L^{} 
  \setminus W_L^2$, we must have $Q(L) = 2$;
\item[h)] First assume that $m$ and $n$ are odd. A 
  subfield $F \subseteq L=\Q(\zeta_m)$, where $m=p^\mu$ is an odd 
  prime power, is a CM-field if and only if it contains the maximal 
  $2$-extension contained in $L$, i.e. if and only $(L:F)$ is odd. 
  Since $(\Q(\zeta_m):K_1)$ and $(\Q(\zeta_n):K_2)$ are both odd, so 
  is $(\Q(\zeta_{mn}):K_1K_2)$; moreover, $\Q(\zeta_{mn})$ has unit 
  index $Q = 2$, hence the assertion follows from f) and g).

  Now assume that $p=2$. If $\sqrt{-1} \in K_1$, then we must have 
  $K_1 = \Q(\zeta_m)$ for $m=2^\alpha$ and some $\alpha \ge 2$ 
  (complex subfields of the field of $2^\mu$th roots of unity 
  containing $\sqrt{-1}$ necessarily have this form). Now $n$ is 
  odd and $K_2 \subseteq \Q(\zeta_n)$ is complex, hence 
  $(\Q(\zeta_n):K_2)$ is odd. By f) it suffices to show that 
  $K_1(\zeta_n) = \Q(\zeta_{mn})$ has unit index $2$, and this 
  follows from g). 

  If $\sqrt{-1} \not\in K_1$, let $\widetilde{K}_1 = K_1(i)$; 
  then $\widetilde{K}_1 = \Q(\zeta_m)$ for $m=2^\alpha$ and some 
  $\alpha \ge 2$, and in the last paragraph we have seen that 
  $Q(\widetilde{K}_1K_2) = 2$. Hence we only need to show that the 
  norm map 
  $$ N : W_{\widetilde{K}_1}/W_{\widetilde{K}_1}^2 
  				\to W^{}_{K_1}/W_{K_1}^2 $$
  is onto: since $(W_{\widetilde{K}_1K_2}:W_{\widetilde{K}_1})$ is odd,
  this implies $2 = Q(\widetilde{K}_1K_2) \mid Q(K_1K_2)$ by e). But
  the observation that the non-trivial automorphism of 
  $\Q(\zeta_m)/K_1$ maps $\zeta_m$ to $-\zeta_m^{-1}$ implies 
  at once that $N(\zeta_m) = -1$, and $-1$ generates 
  $W^{}_{K_1}/W_{K_1}^2$.
 \end{enumerate}

Now let $L$ be a CM-field with maximal real subfield $K$; we will call
$L/K$ {\it essentially ramified} if $L=K(\sqrt \alpha\,)$ and there is
a prime ideal $\fp$ in $\OO_K$ such that the exact power of $\fp$
dividing $\alpha$ is odd; it is easily seen that this does not depend
on which $\alpha$ we choose.  Moreover, every ramified prime ideal
$\fp$ above an odd prime $p$ is necessarily essentially ramified. We
leave it as an exercise to the reader to verify that our definition of
essential ramification coincides with Hasse's \cite[Sect. 22]{H}; the
key observation is the ideal equation $(4\alpha) = \fa^2 \mathfrak d$,
where $\mathfrak d = \disc (K(\sqrt \alpha\,)/K)$ and $\fa$ is an
integral ideal in $\OO_K$.

   We will also need certain totally real elements of norm $2$ in 
the field of $2^m$th roots of unity: to this end we define 
\begin{eqnarray*}
 \pi_2 & = & 2 = 2+\zeta_4+\zeta_4^{-1}, \\
 \pi_3 & = & 2+\sqrt2 = 2+\zeta_8+\zeta_8^{-1}, \\
% \pi_4 & = & 2+\sqrt{2+\sqrt{2}} = 2+\zeta_{16}+\zeta_{16}^{-1}, \\
       & & \qquad \ldots,\\
 \pi_n & = & 2+\sqrt{\pi_{n-1}} = 2+\zeta_{2^n}+\zeta_{2^n}^{-1}.
\end{eqnarray*}

Let $m \ge 2,\ L = \Q(\zeta_{2^{m+1}})$ and $K = \Q(\pi_m)$; then 
$L/K$ is an extension of type $(2,2)$ with subfields
$K_1 = \Q(\zeta_{2^m}), \, K_2 = \Q(\sqrt{\pi_m})$ and 
$K_3 = \Q(\sqrt{-\pi_m})$. Moreover, $K_2/K$ and $K_3/K$ are
essentially ramified, whereas $K_1/K$ is not.

\begin{thm}\label{T1}
Let $L$ be a CM-field with maximal real subfield $K$; 
\begin{enumerate}
\item[(i)] If $w_L \equiv 2 \bmod 4$, then 
\begin{enumerate}
\item[1.] If $L/K$ is essentially ramified, then $Q(L) = 1$, and 
    $\kappa_{L/K} = 1$. 
\item[2.] $L/K$ is not essentially ramified. Then 
    $L=K(\sqrt \alpha\,)$ for some $\alpha \in \OO_K$ such that 
    $\alpha\OO_K = \mathfrak a^2$, where $\mathfrak a$ is an integral ideal 
    in $\OO_K$. Now \newline
   ${}\ $ {\em (a)} $Q(L) = 2$, if $\mathfrak a$ is principal, and \newline
   ${}\ $ {\em (b)} $Q(L) = 1$ and 
          $\kappa_{L/K} = \langle [\mathfrak a]\rangle$,
          if $\mathfrak a$ is not principal.
\end{enumerate}
\item[(ii)] If $w_L \equiv 2^m \bmod 2^{m+1}$, where $m \ge 2$
     then $L/K$ is not essentially ramified, and 
\begin{enumerate}
\item[1.] if $\pi_m\OO_K$ is not an ideal square, then 
            $Q(L) = 1$ and $\kappa_{L/K} = 1$; 
\item[2.] if $\pi_m\OO_K = \mathfrak b^2$ for some integral 
          ideal $\mathfrak b$, then \newline
     ${}\ $ {\em (a)} $Q(L) = 2$, if $\mathfrak b$ is principal, and \newline
     ${}\ $ {\em (b)} $Q(L) = 1$, 
            $\kappa_{L/K} = \langle [\mathfrak b]\rangle$,  
             if $\mathfrak b$ is not principal. 
\end{enumerate}\end{enumerate}
\end{thm}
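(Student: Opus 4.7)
The plan is to unify both cases (i) and (ii) through a single injective map
$\phi\colon\kappa_{L/K}\to W_L/E_L^{\sigma-1}$ sending $[\mathfrak{a}]$,
witnessed by $\mathfrak{a}\OO_L=\beta\OO_L$, to $[\beta^{\sigma-1}]$.
The key preliminary observation is that, because the Dirichlet logarithm
on the totally complex field $L$ is $\sigma$-invariant, one has
$E_L^{\sigma-1}\subseteq W_L$; combined with $W_L^2\subseteq E_L^{\sigma-1}$
and Proposition~\ref{P1}(a) this pins $E_L^{\sigma-1}=W_L^2$ when $Q(L)=1$
and $E_L^{\sigma-1}=W_L$ when $Q(L)=2$. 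The map $\phi$ is well defined on
classes and injective---if $\beta^{\sigma-1}=\eps^{\sigma-1}$ then
$\beta/\eps\in K$ generates $\mathfrak{a}$---so $|\kappa_{L/K}|\le 2$, and
$\kappa_{L/K}=1$ whenever $Q(L)=2$, recovering Proposition~\ref{P1}(b).

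For case~(i) with $W_L=\{\pm 1\}$, the implication ``essentially ramified
$\Rightarrow Q(L)=1$'' follows because $Q(L)=2$ would yield $\eps\in E_L$
with $\eps^{\sigma-1}=-1$, hence $\eps^2\in E_K$ and $L=K(\sqrt{\eps^2})$
exhibited as not essentially ramified. In the essentially ramified case,
$E_L=W_LE_K=E_K$ forces $\beta^{\sigma-1}\in\{\pm 1\}$ for any capitulating
$\beta$, and $\beta^{\sigma-1}=-1$ would again give $L=K(\sqrt{\beta^2})$
a non-essentially-ramified presentation, so $\beta\in K$ and
$\kappa_{L/K}=1$. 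For part~2, write $\alpha\OO_K=\mathfrak{a}^2$; then
$(\sqrt\alpha)\OO_L=\mathfrak{a}\OO_L$ places $[\mathfrak{a}]$ in
$\kappa_{L/K}$. If $\mathfrak{a}=(c)$ then $\sqrt{\alpha/c^2}\in E_L$
realises $Q(L)=2$; otherwise Proposition~\ref{P1}(b) gives $Q(L)=1$, and a
direct ideal-theoretic computation---any capitulating $\mathfrak{a}'$ with
$\beta^{\sigma-1}=-1$ satisfies $\beta^2=\alpha u^2$ for some $u\in K^\times$,
hence $\mathfrak{a}'=\mathfrak{a}\cdot(u)$---confines every nontrivial
capitulating class to $\langle[\mathfrak{a}]\rangle$.

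Case~(ii) runs in close parallel, with $\pi_m$ in the role of $\alpha$ and
$1+\zeta_{2^m}$ in the role of $\sqrt\alpha$. Since $\sqrt{-1}\in L\setminus K$
and $[L:K]=2$, we have $L=K(\sqrt{-1})$, so $L/K$ is not essentially ramified.
The identity $(1+\zeta_{2^m})^2=\pi_m\zeta_{2^m}$ yields the $L$-ideal equation
$((1+\zeta_{2^m})\OO_L)^2=\pi_m\OO_L$. The core equivalence is that $Q(L)=2$
if and only if $\pi_m\OO_K$ is a principal ideal square: given $\eps\in E_L$
with $\eps^{\sigma-1}=\zeta_{2^m}$, the element $c:=\eps(1+\zeta_{2^m})$ is
$\sigma$-fixed, so $c\in K$ and $c^2\OO_K=\pi_m\OO_K$; conversely any such $c$
reconstructs $\eps=c/(1+\zeta_{2^m})$. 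The capitulation claims then follow
the case-(i) template: $\pi_m\OO_K=\mathfrak{b}^2$ yields
$\mathfrak{b}\OO_L=(1+\zeta_{2^m})\OO_L$, so $[\mathfrak{b}]\in\kappa_{L/K}$;
and a nontrivial capitulating $[\mathfrak{a}]$ with
$\beta^{\sigma-1}\equiv\zeta_{2^m}\pmod{W_L^2}$ produces
$\gamma:=\beta(1+\zeta_{2^m})\in K$ with
$\gamma\OO_K=\mathfrak{a}\mathfrak{b}$, forcing $[\mathfrak{a}]=[\mathfrak{b}]$
(since $[\mathfrak{b}]^2=1$).

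The main obstacle is converting the abstract condition ``$\phi$ has
nontrivial image'' into the concrete ideal conditions of the theorem---that
is, showing that such an image forces $\alpha\OO_K$ (respectively $\pi_m\OO_K$)
to be an ideal square, and that this square is principal precisely when
$Q(L)=2$. This reduces to careful bookkeeping with the $\sigma$-fixed
auxiliary element $\gamma=\beta\sqrt\alpha$ (or $\beta(1+\zeta_{2^m})$) and the
descent of $\sigma$-invariant $L$-ideals back to $K$; no deeper machinery is
required once the framework is in place.
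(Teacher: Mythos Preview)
Your argument is correct and follows essentially the same route as the paper: the paper's Lemma~1 is precisely your descent via the $\sigma$-fixed auxiliary element $\gamma=\beta\sqrt\pi$ (respectively $\beta(1\pm\zeta)$), and your injective map $\phi\colon\kappa_{L/K}\to W_L/E_L^{\sigma-1}$ is a clean repackaging of what the paper obtains piecemeal from Proposition~\ref{P1}(a),(b) together with Lemma~1. The only cosmetic differences are your use of $1+\zeta_{2^m}$ in place of the paper's $1-\zeta_{2^m}$ and your more explicit unified framework, which makes the parallel between cases~(i) and~(ii) more visible.
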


For the proof of Theorem \ref{T1} we will need the following

\begin{lem}
Let $L=K(\sqrt\pi\,)$, and let $\sigma$ denote the non-trivial
automorphism of $L/K$. Moreover, let $\mathfrak b$ be an ideal in 
$\OO_K$ such that $\mathfrak b\OO_L = (\beta)$ and $\beta^{\sigma-1}=-1$ 
for some $\beta \in L$. Then $\pi\OO_K$ is an ideal square in $\OO_K$.

If, on the other hand, $\beta^{\sigma-1}=\zeta$, where $\zeta$ is a
primitive $2^m$th root of unity, then $\pi_m\OO_K$ is 
an ideal square in $\OO_K$.
\end{lem}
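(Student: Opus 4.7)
My strategy is to treat both cases uniformly: produce an element $\gamma \in K$ together with a factor $\xi \in K$ satisfying $\xi\,\mathfrak b\OO_L = (\gamma)\OO_L$, then square and descend to $\OO_K$ to get $\xi^2\mathfrak b^2 = (\gamma)^2$ in $K$, from which $(\pi)$ or $(\pi_m)$ will emerge as an ideal square after absorbing $(\gamma)^2$.

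For the first claim, $(\sqrt\pi\,)^{\sigma-1} = -1$ combined with $\beta^{\sigma-1} = -1$ makes $\gamma := \beta/\sqrt\pi$ invariant under $\sigma$, so $\gamma \in K$. Squaring $\beta = \gamma\sqrt\pi$ and substituting $(\beta)\OO_L = \mathfrak b\OO_L$ gives $\mathfrak b^2\OO_L = (\gamma^2\pi)\OO_L$; since both sides are extensions of $\OO_K$-ideals this descends to $\mathfrak b^2 = (\gamma)^2(\pi)$ in $\OO_K$, whence $(\pi) = \bigl(\mathfrak b(\gamma)^{-1}\bigr)^2$. The integrality $\pi \in \OO_K$ forces the fractional ideal $\mathfrak b(\gamma)^{-1}$ to itself be integral.

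For the second claim, $\sigma^2 = 1$ applied to $\beta^\sigma = \zeta\beta$ forces $\zeta^\sigma = \zeta^{-1}$, so $\pi_m = 2 + \zeta + \zeta^{-1}$ lies in $K$. I would then take $\gamma := \beta(1+\zeta)$: the computation $\gamma^\sigma = \zeta\beta(1+\zeta^{-1}) = \beta(\zeta + 1) = \gamma$ shows $\gamma \in K$. Squaring the ideal identity $(1+\zeta)\mathfrak b\OO_L = (\gamma)\OO_L$ and invoking the cyclotomic identity $(1+\zeta)^2 = \zeta(\zeta^{-1} + 2 + \zeta) = \zeta\pi_m$ produces $\pi_m\mathfrak b^2\OO_L = (\gamma^2)\OO_L$, the unit factor $\zeta$ being absorbed. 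Descent to $\OO_K$ gives $(\pi_m) = \bigl((\gamma)\mathfrak b^{-1}\bigr)^2$, and once again integrality of $\pi_m$ promotes the square-root fractional ideal to an integral one.

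The one non-routine ingredient is the substitution $\gamma = \beta(1+\zeta)$ in the second case. One can motivate it by Hilbert 90: the assignment $\sigma \mapsto \zeta$ is a $1$-cocycle in $L^\times$ (the cocycle condition $\zeta \cdot \zeta^\sigma = 1$ holds since $\zeta^\sigma = \zeta^{-1}$), hence a coboundary $\zeta = \eta^{1-\sigma}$, and any such $\eta$ makes $\beta/\eta$ invariant under $\sigma$. The explicit choice $\eta = (1+\zeta)^{-1}$ is the one that links the argument to $\pi_m$, precisely because of the identity $(1+\zeta)^2 = \zeta\pi_m$; everything else is routine Dedekind-ideal bookkeeping.
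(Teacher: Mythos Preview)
Your proof is correct and follows the same strategy as the paper's: multiply $\beta$ by a well-chosen factor so that the product lies in $K$, then square and read off that $(\pi)$ or $(\pi_m)$ is an ideal square. For the first claim your $\gamma=\beta/\sqrt\pi$ is just the inverse (up to $\pi$) of the paper's $\beta\sqrt\pi$, so the arguments coincide.

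For the second claim there is a small but genuine difference in execution. The paper works with the \emph{ideal} $\mathfrak c=(1-\zeta)\OO_L$, observes it is $\sigma$-invariant, and identifies $\mathfrak c^2$ with $N_{L/K}(1-\zeta)\OO_K=(2-\zeta-\zeta^{-1})\OO_K$. You instead work with the \emph{element} $\gamma=\beta(1+\zeta)$, show directly that $\gamma\in K$, and use the identity $(1+\zeta)^2=\zeta\pi_m$. Your route is slightly more elementary in that it never needs to argue that a $\sigma$-invariant ideal of $\OO_L$ actually descends to $\OO_K$; it is also more transparently parallel to the first case. One small point: strictly speaking $2+\zeta+\zeta^{-1}$ is a Galois conjugate of $\pi_m$ rather than $\pi_m$ itself (since $\zeta$ is an arbitrary primitive $2^m$th root of unity), but as all such conjugates generate the same prime above $2$ in $\Q(\zeta_{2^m})^+$, they generate the same ideal in $\OO_K$ and the conclusion is unaffected.
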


\begin{proof}
We have $(\beta\sqrt\pi\,)^{\sigma-1}=1$, hence $\beta\sqrt\pi \in K$.
Therefore $\mathfrak b$ and $\mathfrak c = (\beta\sqrt\pi\,)$ are ideals in 
$\OO_K$, and $(\mathfrak c {\mathfrak b}^{-1})^2 = \pi\OO_K$ proves our claim. 

Now assume that $\beta^{\sigma-1}=\zeta$; then $\sigma$ fixes
$(1-\zeta)\beta^{-1}$, hence $\left((1-\zeta)\beta\right)$ and 
$\mathfrak c = (1-\zeta) = {\mathfrak c}^\sigma$ are ideals in $\OO_K$, 
and $\mathfrak c^2 = N_{L/K}(1-\zeta) = (2+\zeta+\zeta^{-1})\OO_K$ 
is indeed an ideal square in $\OO_K$ as claimed.
\end{proof}

\begin{proof}[Proof of Theorem \ref{T1}]
There are the following cases to consider:\\

\medskip \noindent 
(i) Assume that $w_L \equiv 2 \bmod 4$. 
\begin{enumerate}
  \item[1.] $L/K$ is essentially ramified. \newline
   Assume we had $Q(L)=2$; then $E_L^{\sigma-1} = W_L$, hence there 
   is a unit $\varepsilon \in E_L$ such that $\varepsilon^{\sigma-1} 
   = -1$. Write $L=K(\sqrt\pi\,)$, and apply Lemma 1 to $\mathfrak b=(1), 
   \beta = \varepsilon$: this will yield the contradiction that 
   $L/K$ is not essentially ramified.
 \item[2.] $L/K$ is not essentially ramified. \newline
   Then $L = K(\sqrt \alpha\,)$ for some $\alpha \in \OO_K$ such that 
   $\alpha\OO_K = \mathfrak a^2$, where $\mathfrak a$ is an integral ideal 
   in $\OO_K$. 
\begin{enumerate}
 \item[(a)] If $\mathfrak a$ is principal, say $\mathfrak a = \beta\OO_K$, 
   then there is a unit $\varepsilon \in E_K$ such that 
   $\alpha = \beta^2 \varepsilon$, and we see that 
   $L=K(\sqrt \varepsilon\,)$. Now
   $\sqrt \varepsilon^{\,\sigma-1}=-1$ is no square since 
   $w_L \equiv 2 \bmod 4$, and Prop. 1.a) gives $Q(L)=2$.
 \item[(b)] If $\mathfrak a$ is not principal, then the ideal 
   class $[\mathfrak a]$ capitulates in $L/K$ because $\mathfrak a 
   \OO_L = \sqrt \alpha \OO_L$. Prop. 1.b) shows that $Q(L) = 1$.
\end{enumerate}
\end{enumerate}

\medskip\noindent
(ii) Assume that $w^{}_L \equiv 2^m \bmod 2^{m+1}$ for some $m \ge 2$.
\begin{enumerate}
 \item[1.] If we have $Q(L)=2$ or $\kappa_{L/K} \ne 1$, then 
   Lemma 1 says that $\pi_m\OO_K = \mathfrak b^2$ is an ideal square in 
   $\OO_K$ in contradiction to our assumption.
 \item[2.] Suppose therefore that $\pi_m = \mathfrak b^2$ is an ideal 
   square in $\OO_K$. If $\mathfrak b$ is not principal, then 
   $\mathfrak b\OO_L = (1-\zeta)$ shows that 
   $\kappa_{L/K} = \langle [\mathfrak b]\rangle$, and Prop. 1.b)
   gives $Q(L) = 1$. If, on the other hand, $\mathfrak b = \beta\OO_K$, 
   then
   $\eta\beta^2 = \pi_m$ for some unit $\eta \in E_K$. If $\eta$ were
   a square in $\OO_K$, then $\pi_m$ would also be a square, and
   $L=K(\sqrt{-1}\,)$ would contain the $2^{m+1}$th roots of unity.
   Now $\eta\beta^2 = \pi_m = \zeta^{-1}(1+\zeta)^2$, hence 
   $\eta\zeta$ is a square in $L$, and we have $Q(L)=2$ as claimed. 
\end{enumerate}
\end{proof}

\medskip\noindent{\bf Remark.} 
For $L/\Q$ abelian, Theorem \ref{T1} is equivalent
to Hasse's Satz 22; we will again only sketch the proof: suppose 
that $w_L \equiv 2^m \bmod 2^{m+1}$ for some $m \ge 2$, and define 
$L' = L(\zeta_{2^{m+1}}), \, K' = L' \cap \R$. Then
$K'/K$ is essentially ramified if and only if $\pi_{m}$ is not 
an ideal square in $\OO_K$ (because $K' = K(\pi_{m+1}) = 
K(\sqrt{\pi_m})$). The asserted equivalence should now be clear.
Except for the results on capitulation, Theor. \ref{T1} is also contained 
in \cite{O} (for general CM-fields). 

\medskip
\noindent{\bf Examples.}

1. Complex subfields $L$ of $\Q(\zeta_{p^m})$, where $p$ is 
   prime, have unit index ${Q(L)=1}$ (Hasse's Satz 23) and 
   $\kappa_{L/L^+} = 1$: since $p$ ramifies completely in 
   $\Q(\zeta_{p^m})/\Q$, $L/L^+$ is essentially ramified if 
   $p \ne 2$, and the claim follows from Theorem 1. 
   If $p=2$ and $L/L^+$ is not essentially ramified, then we must 
   have $L = \Q(\zeta_{2^\mu})$ for some $\mu \in \N$, and we
   find $Q(L) = 1$ by Theorem 1.2.1.

2. $L = \Q(\zeta_m)$ has unit index $Q(L) = 1$ if and only
   if $m \not\equiv 2 \bmod 4$ is a prime power (Satz 27). This 
   follows from Example 1. and Prop. \ref{P1}.e)

3. If $K$ is a CM-field, which is essentially ramified at a
   prime ideal $\fp$ above $p \in \N$, and if $F$ is a
   totally real field such that $p \nmid \disc F$, then $Q(L) = 1$
   and $\kappa^{}_{L/L^+}=1$ for $L=KF$: this is again due to the 
   fact that either $L/L^+$ is essentially ramified at the prime 
   ideals above $\fp$, or $p = 2$ and $K = K^+(\sqrt{-1}\,)$. In 
   the first case, we have $Q(L) = 1$ by Theorem 1.1.1, and in the 
   second case by Theorem 1.2.1.  

4. Suppose that the abelian CM-field $K$ 
   is the compositum $K = K_1\ldots K_t$ of fields with pairwise 
   different prime power conductors; then $Q(K) = 1$ if and only if 
   exactly one of the $K_i$ is imaginary. (Uchida \cite[Prop. 3]{U})
   The proof is easy: if there is exactly one complex field among the
   $K_j$, then $Q(K) = 1$ by Example 3. Now suppose that $K_1$ and $K_2$
   are imaginary; we know $Q(K_1K_2) = 2$ (Prop. 1.h), and from the 
   fact that the $K_j$ have pairwise different conductors we deduce
   that $(W_K:W_{K_1K_2}) \equiv 1 \bmod 2$. Now the claim follows 
   from Hasse's Satz 29 (Prop. 1.c). Observe that $\kappa_{K/K^+} = 1$
   in all cases.

5. Cyclic extensions $L/\Q$ have unit index $Q(L) = 1$ (Hasse's
   Satz 24): Let $F$ be the maximal subfield of $L$ such that $(F:\Q)$ 
   is odd. Then $F$ is totally real, and $2 \nmid \disc F$ (this 
   follows from the theorem of Kronecker and Weber). Similarly, let 
   $K$ be the maximal subfield of $L$ such that $(K:\Q)$ is a 
   $2$-power: then $K$ is a CM-field, and $L = FK$. If $K/K^+$ is 
   essentially ramified at a prime ideal $\fp$ above an odd prime $p$, 
   then so is $L/L^+$, because $L/\Q$ is abelian, and all prime ideals 
   in $F$ have odd ramification index. Hence the claim in this case 
   follows by Example 3. above.

   If, however, $K/K^+$ is not essentially ramified at a prime ideal 
   $\fp$ above an odd prime $p$, then $\disc K$ is a $2$-power (recall 
   that $K/\Q$ is cyclic of $2$-power degree). Applying the theorem 
   of Kronecker and Weber, we find that $K \subseteq \Q(\zeta)$,
   where $\zeta$ is some primitive $2^m$th root of unity.
   If $K/K^+$ is essentially ramified at a prime ideal above $2$,
   then so is $L/L^+$, and Theorem 1 gives us $Q(L)=1$.
   If $K/K^+$ is not essentially ramified at a prime ideal above $2$,
   then we must have $K=\Q(\zeta)$, where $\zeta$ is a primitive 
   $2^m$th root of unity; but now $\pi_{m}\OO_{L^+}$ is not
   the square of an integral ideal, and we have $Q(L)=1$ by
   Theorem 1. Alternatively, we may apply Prop. 1.e) and observe
   that $Q(K) = 1$ by Example 1.

6. Let $p \equiv 1 \bmod 8$ be a prime such that the 
   fundamental unit $\eps_{2p}$ of $\Q(\sqrt{2p}\,)$ has norm $+1$
   (by \cite{S}, there are infinitely many such primes; note also 
   that $N\eps_{2p} = +1 \iff (2,\sqrt{2p}\,)$ is principal).
   Put $K = \Q(i,\sqrt{2p}\,)$ and $L = \Q(i,\sqrt{2},\sqrt{p}\,)$.
   Then $Q(K) = 2$ by Theor. 1.2.2.a), whereas the fact that $L$
   is the compositum of $\Q(\zeta_8)$ and $\Q(\sqrt{p}\,)$ shows
   that $Q(L) = 1$ (Example 4). This generalization of Lenstra's 
   example given by Martinet in \cite{H} is contained in Theor. 4
   of \cite{HY}, where several other results of this kind can be 
   found. 

\section{Masley's theorem $h^-_m|h^-_{mn}$} 

Now we can prove a theorem that will contain Masley's result 
$h^-(K)\,|\,h^-(L)$ for cyclotomic fields $K = \Q(\zeta_m)$ and 
$L = \Q(\zeta_{mn})$ as a special case: 

\begin{thm}\label{T2}
Let  $K \subset L$ be CM-fields; then  
$$h^-(K)\,|\,h^-(L)\cdot |\kappa_{L/L^+}\,|
   \cdot\frac{(L\cap K^1:K)}{(L^+ \cap (K^+)^1:K^+)},$$
and the last quotient is a power of $2$.
\end{thm}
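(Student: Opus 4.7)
The plan is to apply the snake lemma to the commutative diagram with exact rows
$$\begin{CD}
\Cl^-(L) @>>> \Cl(L) @>{N_{L/L^+}}>> \Cl(L^+) \\
@V{N'}VV @V{N_{L/K}}VV @V{N_{L^+/K^+}}VV \\
\Cl^-(K) @>>> \Cl(K) @>{N_{K/K^+}}>> \Cl(K^+)
\end{CD}$$
where $N'$ is the restriction of $N_{L/K}$ to $\Cl^-(L)$. Exactness uses the definition of $\Cl^-$ together with the surjectivity of $N_{L/L^+}$ and $N_{K/K^+}$ recalled in Section~1, and $N'$ lands in $\Cl^-(K)$ because $N_{K/K^+}\circ N_{L/K} = N_{L^+/K^+}\circ N_{L/L^+}$ annihilates $\Cl^-(L)$. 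Taking the alternating product of orders in the resulting six-term snake sequence, and using $h^-(L) = |\ker N'|\cdot|\mathrm{im}\,N'|$ together with $h^-(K) = |\mathrm{im}\,N'|\cdot|\cok N'|$, I obtain the key equation
$$h^-(K) \cdot a' \cdot |\ker N_{L/K}| \;=\; h^-(L) \cdot a \cdot |\ker N_{L^+/K^+}|,$$
where $a = (L\cap K^1:K)$ and $a' = (L^+\cap (K^+)^1:K^+)$ are the orders of $\cok N_{L/K}$ and $\cok N_{L^+/K^+}$ by the class field theory input cited in Section~1.

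Next I would establish the divisibility $|\ker N_{L^+/K^+}| \mid |\ker N_{L/K}|\cdot|\kappa_{L/L^+}|$ via transfer--norm compatibility. Because $L = K\cdot L^+$ and $L^+\cap K = K^+$, one has $N_{L/K}(\mathfrak a\OO_L) = N_{L^+/K^+}(\mathfrak a)\OO_K$ for every ideal $\mathfrak a$ of $L^+$, so the square
$$\begin{CD}
\Cl(L^+) @>{j_{L^+\to L}}>> \Cl(L) \\
@V{N_{L^+/K^+}}VV @V{N_{L/K}}VV \\
\Cl(K^+) @>{j_{K^+\to K}}>> \Cl(K)
\end{CD}$$
commutes. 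Consequently $j_{L^+\to L}$ restricts to a homomorphism $\ker N_{L^+/K^+} \to \ker N_{L/K}$ whose kernel is $\ker N_{L^+/K^+}\cap\kappa_{L/L^+}$, and counting orders gives the desired divisibility. Substituting into the snake-lemma identity and cancelling $|\ker N_{L/K}|$ produces $h^-(K)\cdot a' \mid h^-(L)\cdot a\cdot|\kappa_{L/L^+}|$, which becomes the theorem's divisibility after dividing by $a'$ (legitimate once we know $a'\mid a$).

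For the claim that $a/a'$ is a power of $2$, set $N = L\cap K^1$ and $N^+ = N\cap L^+$. Then $N$ is CM with $N = N^+K$, so $a = [N:K] = [N^+:K^+]$, while $N^+\cap (K^+)^1 = L^+\cap (K^+)^1$ has degree $a'$ over $K^+$. The extension $N^+/K^+$ is abelian, since $N/K^+$ is Galois with group $\Gal(N/K)\times\Gal(K/K^+)$ (complex conjugation being central), and because $N/K$ is unramified while $[K:K^+]=2$, every inertia subgroup of $\Gal(N^+/K^+)$ has order dividing~$2$. The subgroup they generate has fixed field $L^+\cap(K^+)^1$ and, being generated by involutions in an abelian group, is elementary abelian of $2$-power order; hence $a/a' = [N^+:L^+\cap(K^+)^1]$ is a power of~$2$.

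The main obstacle is the transfer--norm coupling: the divisibility $|\ker N_{L^+/K^+}| \mid |\ker N_{L/K}|\cdot|\kappa_{L/L^+}|$ is what funnels $|\kappa_{L/L^+}|$ into the bound and upgrades the snake-lemma equality into a divisibility. A secondary concern is that $L/K$ is not assumed Galois, but the CM-structure $L = KL^+$ with centrally acting complex conjugation supplies the symmetry needed for the diagram chase and the ramification count, possibly after passage to a Galois closure.
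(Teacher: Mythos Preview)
Your proof is correct and follows essentially the same route as the paper: the same snake-lemma diagram, the same class-field-theoretic identification of the cokernel orders, and the same transfer--norm argument showing $|\ker N_{L^+/K^+}| \mid |\ker N_{L/K}|\cdot|\kappa_{L/L^+}|$. The only noticeable difference is in the power-of-$2$ claim: the paper argues by induction on $(L:K)$, reducing to the case of an unramified abelian extension of odd prime degree and showing that then $L^+/K^+$ is also unramified, whereas you pass directly to $N=L\cap K^1$, observe that $N/K^+$ is abelian with complex conjugation central, and bound all inertia groups in $\Gal(N^+/K^+)$ by $2$. Both arguments exploit the same ramification constraint, just packaged differently; your version is arguably more transparent, though you should perhaps say one more word on why complex conjugation is central (the standard CM argument) rather than leaving it as a parenthetical.
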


\begin{proof}
Let $\nu_K$ and $\nu_L$ denote the norms $N_{K/K^+}$ and $N_{L/L^+}$, 
respectively; then the 
following diagram is exact and commutative:

$$\begin{CD}
 1 @>>> \Cl^-(L) @>>> \Cl(L) @>\nu_L>> \Cl(L^+) @>>> 1 \\ 
 @.   @VVN^-V   @VVNV     @VVN^+V               @. \\
 1 @>>> \Cl^-(K) @>>> \Cl(K) @>\nu_K>> \Cl(K^+) @>>> 1 
\end{CD}$$

The snake lemma gives us an exact sequence
$$ \begin{CD}
  1 @>>> \ker N^- @>>>  \ker N @>>> \ker N^+ \\
    @>>> \cok N^- @>>>  \cok N  @>>>  \cok N^+ @>>> 1. \end{CD} $$
Let $h(L/K)$ denote the order of $\ker N$, and let $h^-(L/K)$
and $h(L^+/K^+)$ be defined accordingly. 
The remark at the end of Sect. 1 shows
$$ |\,\cok N  \,| = (L\cap K^1:K), \quad 
   |\,\cok N^+\,| = (L^+\cap (K^+)^1:K^+).$$ 
The alternating product of the orders of the groups in 
exact sequences equals $1$, so the above sequence implies
$$ h^-(L/K) \cdot h(L^+/K^+) \cdot |\cok N| =
   h(L/K) \cdot |\cok N^-| \cdot |\cok N^+|.$$
The exact sequence
$$ 1 \to \ker N^- \to \Cl^-(L) \to \Cl^-(K) \to \cok N^- \to 1$$
gives us 
$$ h^-(L/K) \cdot h^-(K) = h^-(L) \cdot |\cok N^-|.$$
Collecting everything we find that
\begin{equation}\label{E1}
h^-(K) \cdot \frac{h(L/K)}{h(L^+/K^+)} \cdot 
\frac{(L^+\cap (K^+)^1:K^+)}{(L\cap K^1:K)} = h^-(L) 
\end{equation}
Now the claimed divisibility property follows if we can prove
that $h(L^+/K^+)$ divides $h(L/K)\cdot|\,\kappa_{L/L^+}\,|$.
But this is easy: exactly $h(L^+/K^+)/|\,\kappa_{L/L^+}\,|$ 
ideal classes of $\ker N^+ \subset \Cl(L^+)$ survive the transfer 
to $\Cl(L)$, and if the norm of $L^+/K^+$ kills an ideal class 
$c \in \Cl(L^+)$, the same thing happens to the transferred class 
$c^j$ when the norm of $L/K$ is applied. We remark in passing that 
$|\,\kappa_{L/L^+}\,| \le 2$ (see Hasse \cite[Satz 18]{H}).
    
It remains to show that $(L\cap K^1:K)/(L^+ \cap (K^+)^1:K^+)$  
is a power of $2$. Using induction on $(L:K)$, we see that it
suffices to prove: if $L/K$ is an unramified abelian extension of 
CM-fields of odd prime degree $(L:K) = q$, then so is $L^+/K^+$.
Suppose otherwise; then there exists a finite prime $\fp$ which ramifies, 
and since $L^+/K^+$ is cyclic, $\fp$ has ramification index $q$. 
Now $L/K^+$ is cyclic of order $2q$, hence $K$ must be the inertia field 
of $\fp$, contradicting the assumption that $L/K$ be unramified. We 
conclude that $L^+/K^+$ is also unramified, and so odd factors of 
$(L\cap K^1:K)$ cancel against the corresponding factors of 
$(L^+\cap (K^+)^1:K^+)$. 
\end{proof}

\begin{cor}[Louboutin and Okazaki \cite{LO}]\label{C1}
Let $K \subset L$ be CM-fields such that $(L:K)$ is odd; then 
$h^-{K} \mid h^-(L)$.
\end{cor}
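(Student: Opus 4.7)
The plan is to combine Theorem \ref{T2} with a direct transfer-norm argument, using the theorem to handle the odd part of $h^-(K)$ and the transfer argument to handle the $2$-part.

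First, observe that Theorem \ref{T2} simplifies drastically when $(L:K)$ is odd: since $L = K L^+$ implies $(L^+:K^+) = (L:K)$, both $(L \cap K^1 : K)$ and $(L^+ \cap (K^+)^1 : K^+)$ are odd divisors of $(L:K)$, and the theorem states their quotient is a power of $2$---forcing the quotient to equal $1$. The divisibility thus collapses to $h^-(K) \mid h^-(L) \cdot |\kappa_{L/L^+}|$, and since $|\kappa_{L/L^+}| \le 2$, the odd part of $h^-(K)$ already divides $h^-(L)$.

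For the $2$-part, I would work with the norm $N \colon \Cl^-(L) \to \Cl^-(K)$ and the transfer $j \colon \Cl^-(K) \to \Cl^-(L)$. Both restrict from the usual maps on full class groups thanks to the compatibilities $N_{K/K^+} \circ N_{L/K} = N_{L^+/K^+} \circ N_{L/L^+}$ and $N_{L/L^+} \circ j_{K \to L} = j_{K^+ \to L^+} \circ N_{K/K^+}$. The standard identity $N \circ j = (L:K) \cdot \mathrm{id}$ gives $N(\Cl^-(L)) \supseteq (L:K) \cdot \Cl^-(K)$. Since $(L:K)$ is odd, the quotient $\Cl^-(K)/(L:K) \Cl^-(K)$ has odd order, so the $2$-Sylow subgroup of $\Cl^-(K)$ is contained in $N(\Cl^-(L))$; in particular the $2$-part of $|N(\Cl^-(L))|$ equals the $2$-part of $h^-(K)$. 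Since $N(\Cl^-(L))$ is a quotient of $\Cl^-(L)$, this $2$-part divides $h^-(L)$.

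Combining the two halves yields $h^-(K) \mid h^-(L)$. The main subtlety is the $2$-part: the bound from Theorem \ref{T2} alone leaves a stray factor $|\kappa_{L/L^+}|$ which can genuinely equal $2$ (precisely when $Q(L) = 1$, by Proposition \ref{P1}(b)). The transfer identity $N \circ j = (L:K)$, together with $(L:K)$ being odd, is exactly what is needed to absorb this factor.
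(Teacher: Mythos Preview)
Your argument is correct, and it takes a genuinely different route from the paper's.

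The paper does not split into odd part and $2$-part. Instead it works directly with the exact equation~(\ref{E1}) from the proof of Theorem~\ref{T2}, observes (as you do) that the ratio of indices equals $1$ when $(L:K)$ is odd, and then shows that $h(L^+/K^+)\mid h(L/K)$ outright. The key step is to prove that $\kappa_{L/L^+}\cap\ker N^+ = 1$: the paper invokes Theorem~\ref{T1} to identify an explicit generator $[\mathfrak a]$ of $\kappa_{L/L^+}$ and checks, using that the defining element $\alpha$ can be chosen in $K^+$ when $(L:K)$ is odd, that $N^+([\mathfrak a])\ne 1$.

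Your approach trades this structural fact for the elementary identity $N\circ j = (L:K)$ on $\Cl^-$, which cleanly absorbs the stray factor of $2$ without ever needing the case analysis of Theorem~\ref{T1}. In this sense your proof is more self-contained. The paper's approach, on the other hand, yields a sharper intermediate statement (the capitulating class is never killed by $N^+$) and gives the divisibility in one stroke rather than prime by prime. Incidentally, your transfer-norm argument is essentially the mechanism behind the Remark immediately following the Corollary in the paper, where the inclusion $\Cl^-_p(K)\subseteq\Cl^-_p(L)$ for $p\nmid(L:K)$ is noted.

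One small imprecision: your parenthetical ``precisely when $Q(L)=1$'' overstates Proposition~\ref{P1}(b), which only gives the implication $|\kappa_{L/L^+}|=2\Rightarrow Q(L)=1$; but this does not affect the argument.
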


\begin{proof} 
From (\ref{E1}) and the fact that $(L^+\cap (K^+)^1:K^+) = 1$ (this
index is a power of $2$ and divides $(L:K)$, which is odd), we 
see that it is sufficent to show that $h(L^+/K^+)\mid h(L/K)$.
This in turn follows if we can prove that no ideal class from
$\ker N^+ \subseteq \Cl(L^+)$ capitulates when transferred to 
$\Cl(L)$. 
Assume therefore that $\kappa_{L/L^+} = \langle [{\mathfrak a}] \rangle$.
If $w_L \equiv 2 \bmod 4$, then by  Theor. 1.1.2 we may assume
that $L = L^+(\sqrt{\alpha}\,)$, where $\alpha\OO_{L^+} = {\mathfrak a}^2$.
Since $(L:K)$ is odd, we can choose $\alpha \in O_{K^+}$, hence
 $N^{+}({\mathfrak a}) = {\mathfrak a}^{(L:K)}$ shows that the ideal class
$[{\mathfrak a}]$ is not contained in $\ker N^+$. The proof in the case 
$w_L \equiv 0 \bmod 4$ is completely analogous.  
\end{proof} 

\medskip\noindent{\bf Remark.} 
For any prime $p$, let $\Cl^-_p(K)$ denote the 
$p$-Sylow subgroup of $\Cl^-(K)$; then $\Cl^-_p(K) \subseteq 
\Cl^-_p(L)$ for every $p \nmid (L:K)$. This is trivial, because
ideal classes with order prime to $(L:K)$ cannot capitulate in $L/K$. 
\medskip

\begin{cor}[Masley \cite{MM}]\label{C2}
If $K = \Q(\zeta_m)$ and $L = \Q(\zeta_{mn})$ for some $m,n \in \N$,
then $h^-(K)\,|\,h^-(L).$
 \end{cor}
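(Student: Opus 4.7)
The plan is to apply Theorem \ref{T2} with $K = \Q(\zeta_m)$ and $L = \Q(\zeta_{mn})$. Normalise so that $m \ge 3$ (else $K = \Q$ and the claim is trivial) and $m, mn \not\equiv 2 \bmod 4$ (otherwise replace each by its half, leaving $K$ and $L$ unchanged); then both $K$ and $L$ are CM-fields. I will show that the two quantities $\kappa_{L/L^+}$ and $(L \cap K^1 : K)$ are trivial. Granting this, equation~(\ref{E1}) reduces to
\[ h^-(L) \;=\; h^-(K) \cdot \frac{h(L/K)}{h(L^+/K^+)} \cdot (L^+ \cap (K^+)^1 : K^+), \]
and the transfer argument used in the proof of Theorem \ref{T2} gives $h(L^+/K^+) \mid h(L/K) \cdot |\kappa_{L/L^+}| = h(L/K)$, making the right-hand side an honest positive integer multiple of $h^-(K)$.

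For $\kappa_{L/L^+} = 1$: if $mn$ is a prime power, apply Example~1 directly to $L = \Q(\zeta_{mn})$; otherwise $mn$ is composite with $mn \not\equiv 2 \bmod 4$, so Prop.~\ref{P1}g) gives $Q(L) = 2$, and then Prop.~\ref{P1}b) forces $\kappa_{L/L^+} = 1$.

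For $L \cap K^1 = K$: writing $p^{a_p} \| m$ and $p^{c_p} \| mn$, the standard identification $\Gal(L/\Q) \cong \prod_{p \mid mn}(\Z/p^{c_p}\Z)^{\times}$ yields a direct product decomposition
\[ \Gal(L/K) \;\cong\; \prod_{p \mid n} G_p, \qquad G_p := \ker\!\bigl((\Z/p^{c_p}\Z)^{\times} \to (\Z/p^{a_p}\Z)^{\times}\bigr). \]
For each $p \mid n$, the local extension $L_{\mathfrak P}/K_{\mathfrak p}$ is obtained from $K_{\mathfrak p}$ by adjoining $\zeta_{p^{c_p}}$, giving a totally ramified extension of degree $|G_p|$; hence the inertia subgroup of $\Gal(L/K)$ at any prime above $p$ coincides with the factor $G_p$. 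The subgroups $G_p$ already generate the whole direct product, so no nontrivial subextension of $L/K$ is everywhere unramified, and $L \cap K^1 = K$ follows.

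The main obstacle is the local ramification analysis of the last step; however, once the $p$-local product decomposition of $\Gal(L/K)$ is written down, the identification of each $G_p$ with the full inertia subgroup at primes above $p$ is immediate from the totally ramified local structure of cyclotomic extensions.
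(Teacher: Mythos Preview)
Your argument is correct and follows the same route as the paper: apply Theorem~\ref{T2} after checking that $\kappa_{L/L^+}=1$ (via the results of Section~2) and that $L\cap K^1=K$ (via ramification). One small slip: when $n$ has more than one prime factor, the completion $L_{\mathfrak P}$ is \emph{not} just $K_{\mathfrak p}(\zeta_{p^{c_p}})$---the remaining roots of unity contribute an unramified piece---so $L_{\mathfrak P}/K_{\mathfrak p}$ need not be totally ramified. What you actually need (and what is true) is that the inertia subgroup at $p$ in $\Gal(L/\Q)\cong\prod_q(\Z/q^{c_q}\Z)^\times$ is the full $p$-factor $(\Z/p^{c_p}\Z)^\times$, hence its intersection with $\Gal(L/K)$ is exactly your $G_p$; the conclusion that the inertia groups generate $\Gal(L/K)$ then stands. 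The paper packages the same ramification fact as an induction: for $n=p$ prime the prime $p$ is totally ramified in $L/K$, so $L\cap K^1=K$, and one iterates over the prime factors of $n$.
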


\begin{proof} 
We have shown in Sect.$\,$2 that $j_{K^+ \to K} \text{ and } 
j_{L^+ \to L} $ are injective in this case. Moreover, $L/K$ does not
contain a nontrivial subfield of $K^1$ (note that $p$ is completely
ramified in $L/K$ if $n=p$, and use induction).
\end{proof} 

The special case $m=p^a,\,n=p$ of Corollary 2 can already be found 
in \cite{We}. Examples of CM-fields $L/K$ such that $h^-(K) \nmid 
h^-(L)$ have been given by Hasse \cite{H}; here are some more:
\begin{enumerate}
\item Let $d_1 \in \{-4, -8, -q\ (q \equiv 3 \bmod 4)\}$ 
   be a prime discriminant, and suppose that $d_2 > 0$ is the 
   discriminant of a real quadratic number field such that 
   $(d_1,d_2) = 1$. Put $K = \Q(\sqrt{d_1d_2}\,)$ and 
   $L=\Q(\sqrt{d_1},\sqrt{d_2}\,)$; then $Q(L) = 1$ and 
   $\kappa_{L/L^+} = 1$ by Example 4, and $(L\cap K^1:K) = 
   2\cdot (L^+ \cap (K^+)^1:K^+)$ since $L/K$ is unramified but
   $L^+/K^+$ is not. The class number formula (1) below shows that
   in fact $h^-(K) \nmid h^-(L)$.
\item Let $d_1=-4, d_2 = 8m$ for some odd $m \in \N$, and suppose 
   that $\mathfrak 2 = (2,\sqrt{2m}\,)$ is not principal in $\OO_k$, where
   $k = \Q(\sqrt{2m}\,)$. Then $h^-(K) \nmid h^-(L)$ for 
   $K = \Q(\sqrt{-2m}\,),\, L = \Q(\sqrt{-1}, \sqrt{2m}\,)$.
   Here $(L\cap K^1:K) = (L^+ \cap (K^+)^1:K^+),$ but $\kappa_{L/L^+} 
   = \langle [\mathfrak 2]\rangle$, since $\mathfrak 2 \OO_L = (1+i)$.
   This example shows that we cannot drop the factor 
   $\kappa_{L/L^+}$ in Theor. 2.
\end{enumerate}
Other examples can be found by replacing $d_1$ in Example 2. by 
$d_1 = -8$ or $d_2$ by $d_2 = 4m, m\in \N$ odd. The proof that 
in fact $h^-(K) \nmid h^-(L)$ for these fields uses Theorem \ref{T1}, 
as well as Prop. \ref{P2} and \ref{P3} below.

\section{Mets\"{a}nkyl\"{a}'s factorization}

An extension $L/K$ is called a $V_4$-extension of CM-fields if
\begin{enumerate}
\item $L/K$ is normal and $\Gal(L/K) \simeq V_4 = (2,2)$;
\item Exactly two of the three quadratic subfields are CM-fields;
         call them $K_1$ and $K_2$, respectively.
\end{enumerate}
This implies, in particular, that $K$ is  totally real, and that $L$
is a CM-field with maximal real subfield $L^+= K_3$. We will write
$Q_1 = Q(K_1), W_1 = W_{K_1}, $ etc. 

Louboutin \cite[Prop. 13]{Lou} has given an analytic proof of the
following class number formula for $V_4$-extension of CM-fields, 
which contains Lemma 8 of Ferrero \cite{F} as a special case:

\begin{prop}\label{P2}
Let $L/K$ be a $V_4$-extension of CM-fields; then
$$ h^-(L) = \frac{Q(L)}{Q_1Q_2} \frac{w^{}_L}{w^{}_1w^{}_2} 
            h^-(K_1)h^-(K_2). $$
\end{prop}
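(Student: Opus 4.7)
The plan is to follow Louboutin's analytic strategy, combining a Brauer-type factorization of zeta functions with the analytic class number formula, the conductor--discriminant formula, and the standard regulator comparison for CM fields. The first step is the Brauer relation: since $\Gal(L/K)\simeq V_4$ has four rational characters summing to the regular representation, one obtains
\[ \zeta_L(s)\cdot\zeta_K(s)^2 \;=\; \zeta_{K_1}(s)\cdot\zeta_{K_2}(s)\cdot\zeta_{K_3}(s), \]
so that taking residues at $s=1$ gives $\kappa_L\kappa_K^2 = \kappa_{K_1}\kappa_{K_2}\kappa_{K_3}$, where $\kappa_F$ denotes the residue of $\zeta_F$. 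I would substitute the Dirichlet residue formula $\kappa_F = 2^{r_1}(2\pi)^{r_2} h(F) R(F)/(w_F\sqrt{|d_F|})$ for each of the six fields; the counts of complex places on the two sides match, so the powers of $(2\pi)$ cancel.

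Next I would kill the discriminants using the conductor--discriminant formula for the abelian extension $L/K$, which gives $d_{L/K} = d_{K_1/K}\,d_{K_2/K}\,d_{K_3/K}$; descending via the tower formula yields the absolute identity $|d_L|\cdot|d_K|^2 = |d_{K_1}|\cdot|d_{K_2}|\cdot|d_{K_3}|$, so the discriminants drop out of the identity from Step~1 entirely.

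For the regulators I would invoke the classical CM-field formula $R(F)/R(F^+) = 2^{m-1}/Q(F)$ with $m = [F^+:\Q]$. Applying this to $L/K_3$ (with $m=2[K:\Q]$) and to $K_i/K$ (with $m=[K:\Q]$) for $i=1,2$, the regulator ratio $R(L)R(K)^2/(R(K_1)R(K_2)R(K_3))$ collapses to $2\,Q_1 Q_2/Q(L)$, which is precisely where the Hasse unit indices enter. Assembling the three steps, the surviving identity becomes
\[ h(L)\cdot h(K)^2 \;=\; \frac{Q(L)\,w_L}{Q_1 Q_2\,w_1 w_2}\cdot h(K_1)\,h(K_2)\,h(K_3), \]
and a final division by $h(K_3)\,h(K)^2$ converts absolute class numbers into minus class numbers (using $L^+ = K_3$ and $K_i^+ = K$ for $i=1,2$), yielding the proposition.

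The main obstacle is not any single ingredient---each step is a standard invocation of a textbook result---but the careful bookkeeping of the $2$-powers coming from $2^{r_1}$ in the class number formula, from $2^{m-1}$ in the regulator comparison, and from the root-of-unity factors, all of which must balance exactly. An alternative purely algebraic approach via the snake lemma applied to the norm maps $N_{L/K_1}$ and $N_{L/K_2}$, in the spirit of Theorem~\ref{T2}, seems feasible, but tracking the $2$-part of the unit indices and capitulation kernels there looks considerably more delicate than the analytic route sketched above.
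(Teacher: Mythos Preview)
Your argument is correct and is essentially Louboutin's original analytic proof, which the paper explicitly mentions but then replaces. The paper instead invokes Kuroda's class number formula
\[
  h(L)=2^{d-\kappa-2-\upsilon}\,q(L)\,\frac{h(K_1)h(K_2)h(L^+)}{h(K)^2}
\]
(in the algebraic form of \cite{L}) and then proves a separate unit-index identity (Proposition~\ref{P3}),
\[
  \frac{Q(L)}{Q_1Q_2}\,\frac{w_L}{w_1w_2}=2^{-1-\upsilon}q(L),
\]
which converts Kuroda's index $q(L)=(E_L:E_1E_2E_3)$ into the Hasse indices $Q(L),Q_1,Q_2$. Substituting this into Kuroda's formula and dividing by $h(K_3)h(K)^2$ gives the result.

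The difference is therefore where the unit computation happens: you offload it onto the standard CM regulator comparison $R(F)/R(F^+)=2^{m-1}/Q(F)$, while the paper makes the unit comparison explicit and elementary via Proposition~\ref{P3}. Your route is shorter if one accepts the analytic machinery (zeta residues, conductor--discriminant, the regulator formula) as black boxes; the paper's route avoids zeta functions entirely, and Proposition~\ref{P3} is of independent interest since it relates the Kuroda index to Hasse's indices directly. Both approaches ultimately encode the same Brauer relation for $V_4$, but the paper's version isolates the arithmetic of the unit groups rather than hiding it inside a regulator ratio.
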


\begin{proof} Kuroda's class number formula (for an algebraic proof 
see \cite{L}) yields 
\begin{equation}\label{E2}
 h(L) = 2^{d-\kappa-2-\upsilon}q(L)h(K_1)h(K_2)h(L^+)/h(K)^2, 
\end{equation}
where 
\begin{itemize}
\item $d = (K:\Q)$ is the number of infinite primes of $K$ that 
           ramify in $L/K$;
\item $\kappa = d-1$ is the $\Z$-rank of the unit group of $K$;
\item $\upsilon= 1$ if and only if all three quadratic subfields
          of $L/K$ can be written as $K(\sqrt\varepsilon\,)$ for 
          units $\varepsilon \in E_K$, and $\upsilon = 0$ otherwise;
\item $q(L)= (E_L:E_1E_2E_3)$ is the unit index for extensions
          of type $(2,2)$; here $E_j$ is the unit group of $K_j$ 
          (similarly, let $W_j$ denote the group of roots unity in
           $L_j$).
\end{itemize}

Now we need to find a relation between the unit indices involved;
we claim

\begin{prop}\label{P3}
If $L/K$ is a $V_4$-extension of CM-fields, then
$$ \frac{Q(L)}{Q_1Q_2} \frac{w_L}{w_1w_2} = 2^{-1-\upsilon}q(L). $$
\end{prop}
 
Plugging this formula into equation (\ref{E2}) and recalling 
$h^-(F) = h(F)/h(F^+)$ for CM-extensions $F/F^+$ yields Louboutin's 
formula.
\end{proof}

\begin{proof}[Proof of Prop. \ref{P3}]
We start with the observation
$$ Q(L) = (E_L:W_LE_3) = (E_L:E_1E_2E_3) 
         \frac{(E_1E_2E_3:W_1W_2E_3)}{(W_LE_3:W_1W_2E_3)}. $$
In \cite{L} we have defined groups $E_j^* = \{\eps \in E_j: 
N_j \eps \text{ is a square in }E_K\}$, where $N_j$ denotes the 
norm of $K_j/K$; we also have shown that
$$ (E_1E_2E_3:E_1^*E_2^*E_3^*) = 2^{-\upsilon} \prod (E_j:E_j^*) $$
and $E_j/E_j^* \simeq E_K/N_jE_j$. Now Prop. \ref{P1}.a) gives 
$(E_K:N_jE_j) = Q_j$ for $j=1, 2$, and we claim

\begin{enumerate}
\item $(W_LE_3:W_1W_2E_3) = (W_L:W_1W_2) =
             2 \cdot \frac{w^{}_L}{w^{}_1w^{}_2};$
\item $ E_1^*E_2^*E_3^* = W_1W_2E_3^*; $  
\item $(W_1W_2E_3:W_1W_2E_3^*) = (E_3:E_3^*).$
\end{enumerate}
This will give us
\begin{equation}\label{E3}
 Q(L) = 2^{-1-\upsilon}q(L)Q_1Q_2 \frac{w^{}_1w^{}_2}{w^{}_L}, 
\end{equation}  
completing the proof of Prop. \ref{P3} except for the three claims above:
\begin{enumerate}
\item $ W_LE_3/W_1W_2E_3 \simeq W_L/(W_L \cap W_1W_2E_3) 
                             \simeq W_L/W_1W_2 $,
     and the claim follows from $W_1 \cap W_2 = \{-1, +1\}$;
\item We only need to show that $E_1^*E_2^*E_3^* \subset W_1W_2E_3^*$;
but Prop. \ref{P1}.a) shows that 
$\eps \in E_1^* \iff \eps^{\sigma+1} \in E_K^2 \iff \eps \in W_1E_K$,
and this implies the claim;      
\item $ W_1W_2E_3/W_1W_2E_3^* \simeq E_3/E_3 \cap W_1W_2E_3^*
                \simeq E_3/E_3^* $.    
\end{enumerate} 
\end{proof}

Combining the result of Section 3 with Prop. \ref{P2}, 
we get the following

\begin{thm}\label{T3}
Let $L_1$ and $L_2$ be CM-fields, and let $L=L_1L_2$ and 
$K =L_1^+L_2^+$; then $L/K$ is a $V_4-$extension of CM-fields with 
  subfields $K_1 = L_1L_2^+,\ K_2=L_1^+L_2, \ K_3=L^+$, and
  $$ h^-(L) = \frac{Q(L)}{Q_1Q_2} \frac{w^{}_L}{w^{}_1w^{}_2} 
     h^-(L_1)h^-(L_2)T_1T_2,$$
where $T_1 = h^-(L_1L_2^+)/h^-(L_1)$ and 
$T_2 = h^-(L_2L_1^+)/h^-(L_2).$ 
\newline
If we assume that $\kappa_1 = \kappa_2 = 1$ ($\kappa_1$ 
is the group of ideal classes capitulating in $L_1L_2^+/K,\,\kappa_2$ 
is defined similarly) and that 
\begin{eqnarray*}
(L_1L_2^+ \cap L_1^1 : L_1) & = (L_1^+L_2^+ \cap (L_1^+)^1 : L_1^+), \\
(L_2L_1^+ \cap L_2^1 : L_2) & = (L_2^+L_1^+ \cap (L_2^+)^1 : L_2^+),
\end{eqnarray*}
then $T_1$ and $T_2$ are integers. 
\end{thm}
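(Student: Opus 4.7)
The plan is to realize $L/K$ as a $V_4$-extension of CM-fields and invoke Proposition \ref{P2}, and then to derive integrality of $T_1, T_2$ from Theorem \ref{T2} applied to the intermediate CM-extensions $L_1 \subset L_1 L_2^+$ and $L_2 \subset L_2 L_1^+$.

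First I would verify the $V_4$-setup. Writing $L_i = L_i^+(\sqrt{-d_i})$ with $d_i \in L_i^+$ totally positive, we have $L = K(\sqrt{-d_1},\sqrt{-d_2})$, so $\Gal(L/K) \subseteq V_4$, with equality in the non-degenerate case. The three quadratic subfields over $K$ are $K(\sqrt{-d_1}) = L_1 L_2^+ = K_1$ and $K(\sqrt{-d_2}) = L_1^+ L_2 = K_2$, both CM because they contain the CM-fields $L_1$ and $L_2$ respectively, together with $K(\sqrt{d_1 d_2}) = L^+ = K_3$, which is totally real as the fixed field of complex conjugation on $L$. Hence $L/K$ is a $V_4$-extension of CM-fields, and Proposition \ref{P2} yields
$$h^-(L) = \frac{Q(L)}{Q_1 Q_2}\frac{w^{}_L}{w^{}_1 w^{}_2}\, h^-(K_1)\, h^-(K_2),$$
with $Q_i = Q(K_i)$ and $w_i = w_{K_i}$. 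Substituting $h^-(K_1) = h^-(L_1)T_1$ and $h^-(K_2) = h^-(L_2)T_2$ by definition of $T_1, T_2$ gives the displayed class number formula of the theorem.

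For integrality of $T_1$, I would apply Theorem \ref{T2} to the CM-extension $L_1 \subset L_1 L_2^+$, whose maximal real subfields are $L_1^+$ and $L_1^+ L_2^+$. The theorem then produces
$$h^-(L_1) \,\Big|\, h^-(L_1 L_2^+) \cdot |\kappa_{L_1 L_2^+ / L_1^+ L_2^+}| \cdot \frac{(L_1 L_2^+ \cap L_1^1 : L_1)}{(L_1^+ L_2^+ \cap (L_1^+)^1 : L_1^+)}.$$
The hypothesis $\kappa_1 = 1$, which in the notation of Theorem \ref{T2} is exactly $|\kappa_{L_1 L_2^+ / L_1^+ L_2^+}|$, kills the middle factor, and the first assumed index equality kills the last quotient, leaving $h^-(L_1) \mid h^-(L_1 L_2^+)$, i.e., $T_1 \in \Z$. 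The argument for $T_2$ is entirely symmetric, using the CM-extension $L_2 \subset L_2 L_1^+$ and the second assumed index equality.

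The main obstacle here is conceptual bookkeeping rather than substantive difficulty: one must carefully match the quantities $Q_i, w_i$ and the capitulation/class-field indices appearing in Proposition \ref{P2} and Theorem \ref{T2} with the data in Theorem \ref{T3}, and handle (or exclude) the degenerate case in which $L_1$ and $L_2$ are not linearly disjoint over $K$, so that $\Gal(L/K)$ is a proper subgroup of $V_4$. Apart from that, the proof is a direct assembly of the two previously established results and requires no new input.
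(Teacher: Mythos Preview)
Your proposal is correct and follows exactly the paper's route: the paper's own proof is the single sentence ``Theorem \ref{T3} follows directly from Theorem \ref{T2} and Proposition \ref{P2},'' and you have simply unpacked this, applying Proposition \ref{P2} to the $V_4$-extension $L/K$ for the formula and Theorem \ref{T2} to each tower $L_i \subset L_iL_j^+$ for the integrality of $T_i$. Your observation about the degenerate (non-$V_4$) case is a fair caveat that the paper leaves implicit.
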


\begin{proof} 
Theorem \ref{T3} follows directly from Theorem \ref{T2} and Proposition 
\ref{P2}. 
\end{proof}

\begin{diagram}[width=2em,height=2em]
& & &     &        &   L     &         & & & & \\
& & &     & \ruLine & \vLine & \luLine &     & & & \\
& & & K_1 &         &   L^+  &         & K_2 & & & \\ 
& &  \ruLine(3,3) &  & \rdLine & \vLine & \ldLine &  & \luLine & & \\
& & &     &        &   K     &         & & &  L_2 & \\
 L_1 & & &     &  \ldLine(3,3)    &    & \rdLine & & \ruLine & & \\
& \luLine & &  &  & & & L_2^+ \\
& & L_1^+ &  &  &  & \ruLine(3,3) &  \\
& &  & \rdLine &  & & &  \\
& &  &  & \Q & & &  \\
\end{diagram}

\bigskip

% $$ \beginpicture
% \setlinear
% \setcoordinatesystem
% \plot 123 12 96 39 /
% \plot 132 12 192 72 /
% \plot 79 57 64 72 /
% \plot 96 56 154 114 /
% \plot 194 86 167 113 /
% \plot 60 90 120 150 /
% \plot 150 130 135 145 /
% \plot 210 87 225 102 /
% \plot 170 130 185 145 /
% \plot 135 165 150 180 /
% \plot 160 131 160 145 /
% \plot 160 164 160 177 /
% \plot 227 119 200 146 /
% \plot 183 165 168 180 /
% \put {$\Q$} at 127 5
% \put {The following Hasse diagram} at 20 187 
% \put {explains the situation: } at 20 175
% \put {$L_1^+$} at 88 47
% \put {$L_2^+$} at 200 79
% \put {$K$} at 159 121 
% \put {$L_1$} at 54 81
% \put {$K_1$} at 129 155
% \put {$L^+$} at 161 155
% \put {$K_2$} at 192 155
% \put {$L_2$} at 234 112
% \put {$L$} at 159 187
% \endpicture $$

\medskip

Now let $m=p^\mu$ and $n=q^\nu$ be prime powers, and suppose that 
$p \ne q$ Moreover, let $L_1 \subseteq \Q(\zeta_m)$ and 
$L_2 \subseteq \Q(\zeta_n)$ be CM-fields. Then 
\begin{enumerate}
\item $Q(L)=2,\,Q_1 = Q(L_1L_2^+) = Q_2 = Q(L_2L_1^+)=1$: 
   this has been proved in Prop. \ref{P1}.h) and Example 4 
   in Sect. 2;
\item $w_1w_2 = 2w_L$ (obviously);
\item $\kappa_1=\kappa_2=1$: see Example 4 in Sect. 2;
\item $(L_1L_2^+\cap L_1^1:L_1)=(L_1^+L_2^+ \cap (L_1^+)^1:L_1^+)$:  
        this, as well as the corresponding property for $K_2$, is 
        obvious, because the prime ideals above $p$ and $q$ 
        ramify completely in $L/L_2$ and $L/L_1$, respectively.
\end{enumerate}

In particular, we have the following

\begin{cor} (Mets\"ankyl\"a) 
Let $L_1 \subseteq \Q(\zeta_m)$ and $L_2 \subseteq \Q(\zeta_n)$ 
be CM-fields, where $m=p^\mu$ and $n=q^\nu$ are prime powers, 
and let $L=L_1L_2$; then
$$ h^-(L) = h^-(L_1)h^-(L_2)T_1T_2,$$
where  $T_1 = h^-(L_1L_2^+)/h^-(L_1)$ and 
$T_2 = h^-(L_2L_1^+)/h^-(L_2)$ are integers.
\end{cor}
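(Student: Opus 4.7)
The plan is to read this corollary as a direct specialization of Theorem \ref{T3} in which all of the ``correction factors'' collapse to $1$ and both of the hypotheses ensuring integrality of $T_1,T_2$ are verified. So the work is almost entirely bookkeeping based on the four bulleted facts listed immediately above the corollary.

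First, I would invoke Theorem \ref{T3} with the given $L_1,L_2$ to obtain
\[
 h^-(L) \;=\; \frac{Q(L)}{Q_1Q_2}\,\frac{w^{}_L}{w^{}_1w^{}_2}\,h^-(L_1)h^-(L_2)T_1T_2,
\]
where $K_1=L_1L_2^+$, $K_2=L_1^+L_2$, $K_3=L^+$ are the three quadratic subfields of $L/K$, so that $Q_1=Q(L_1L_2^+)$ and $Q_2=Q(L_2L_1^+)$. The first step is then to show that the prefactor $\tfrac{Q(L)}{Q_1Q_2}\cdot\tfrac{w^{}_L}{w^{}_1w^{}_2}$ equals $1$: by point~(1), $Q(L)=2$ (from Proposition~\ref{P1}.h) and $Q_1=Q_2=1$ (from Example~4 of Section~2), while point~(2) gives $w^{}_1w^{}_2=2w^{}_L$ (since $\gcd(m,n)=1$ forces $W_{K_i}$ to be generated by the roots of unity in the two factors, whose intersection is $\{\pm1\}$). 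Multiplying these gives exactly $2\cdot\tfrac{1}{2}=1$.

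Second, to conclude that $T_1$ and $T_2$ are integers, I would verify the two hypotheses of Theorem~\ref{T3}: the capitulation hypothesis $\kappa_1=\kappa_2=1$ is point~(3), again supplied by Example~4, and the equality of Hilbert-class-field indices is point~(4), which I would justify by noting that the prime $p$ (resp.~$q$) is totally ramified in $L_1/\Q$ (resp.~$L_2/\Q$) since $L_i$ is a CM-subfield of a prime-power cyclotomic field, so any subextension of $L/L_1$ sitting inside $L_1^1$ would have to be unramified at the primes above $p$, forcing it to be contained already in $L_1^+L_2^+$; the same analysis applies on the $L_2$ side. Once both hypotheses hold, Theorem~\ref{T3} guarantees $T_1,T_2\in\Z$, and the displayed formula of the corollary follows.

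I do not expect any serious obstacle: the entire content of the corollary has been pre-assembled in the four bulleted observations, and my job is just to feed them into Theorem~\ref{T3}. If there is a subtlety it lies in point~(4), where one must be sure that the two Hilbert-class-field indices really are equal and not merely that one divides the other; but since the only possible contribution could come from ramified primes above $p$ or $q$, and those are completely ramified in the relevant relative extensions, the argument is clean.
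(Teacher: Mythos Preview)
Your proposal is correct and follows exactly the paper's approach: feed the four bulleted observations into Theorem~\ref{T3} so that the prefactor collapses to $1$ and the integrality hypotheses are met. One small slip in your justification of point~(4): the relevant total ramification is that of primes above $q$ in $L_1L_2^+/L_1$ (and above $p$ in $L_1^+L_2/L_2$), which forces both Hilbert-class-field indices to equal $1$; your phrase ``forcing it to be contained already in $L_1^+L_2^+$'' is garbled since $L_1\not\subseteq L_1^+L_2^+$, but your closing sentence has the right mechanism.
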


It still remains to identify the character sums $T_{01}$ 
and $T_{10}$ in \cite{M} with the class number factors $T_1$ and
$T_2$ given above. But this is easy: the character group $X(L_1)$
corresponding to the field $L_1$ is generated by a character
$\chi_1$, and it is easily seen that 
$$\begin{array}{rlcrlc} 
   X(L_1)      & = &\langle\chi_1\rangle, & 
   X(L_1L_2^+) & = &\langle\chi_1,\chi_2^2\rangle, \\
   X(L_2)      & = &\langle\chi_2\rangle, & 
   X(L_2L_1^+) & = &\langle\chi_2,\chi_1^2\rangle, \\
   X(L)        & = &\langle\chi_1,\chi_2\rangle,  & 
   X(L^+)      & = &\langle\chi_1\chi_2,\chi_1^2\rangle.
\end{array}$$

The analytical class number formula for an abelian CM-field $K$ reads
\begin{equation}\label{E4}
 h^-(K) = Q(K)w^{}_K \prod_{\chi \in X^-(K)} \frac{1}{2\mathfrak f(\chi)}
        \sum_{a\, {\bmod}^+ \,\mathfrak f(\chi)} (-\chi(a)a), 
\end{equation}
where $a\, {\bmod}^+ \,\mathfrak f(\chi)$ indicates that the sum is 
extended over all $1 \le a \le \mathfrak f(\chi)$ such that 
$(a,\mathfrak f(\chi)) = 1 $, and $X^-(L) = X(L) \setminus X(L^+)$ is the 
set of $\chi \in X(L)$ such that $\chi(-1) = -1$. Applying formula 
(\ref{E4}) to the CM-fields listed above and noting that 
$Q(L)=2$, $Q(L_1) = Q(L_2) = Q(L_1L_2^+) = Q(L_2L_1^+) = 1, 
 2w_L=w_1w_2$, we find
$$ h^-(L) = h^-(L_1)\cdot h^-(L_2) \prod_{\chi \in X^*(L)} 
            \frac{1}{2\mathfrak f(\chi)}
            \sum_{a\, {\bmod}^+ \,\mathfrak f(\chi)} (-\chi(a)a), $$
where $X^*(L)$ is the subset of all $\chi \in X^-(L)$
not lying in $X^-(L_1)$ or $X^-(L_2)$. Now define 
$X_1(L) = \{\chi = \chi_1^x\chi_2^y \in X^*(L): 
\, x \equiv 1 \bmod 2, \,y \equiv 0 \bmod 2 \}$, and let $X_2(L)$ be 
defined accordingly. Then $ X^*(L) = X_1(L) \cup X_2(L)$, and 
$$ h^-(L_1) \cdot \prod_{\chi \in X_1(L)} \frac{1}{2\mathfrak f(\chi)}
     \sum_{a\, {\bmod}^+ \,\mathfrak f(\chi)} 
     (-\chi(a)a) = h^-(L_1L_2^+),$$
and we have shown that 
$$ T_1 = \prod_{\chi \in X_1(L)} \frac{1}{2\mathfrak f(\chi)}
       \sum_{a\, {\bmod}^+ \,\mathfrak f(\chi)} (-\chi(a)a). $$
Comparing with the definition of Mets\"{a}nkyl\"{a}'s factor $T_{10}$,
this shows that indeed $T_1 = T_{10}$.  

\section*{Acknowldegement}

I would like to thank St\'ephane Louboutin and Ryotaro Okazaki
as well as Tauno Mets\"ankyl\"a for several helpful suggestions 
and for calling my attention to the papers of Horie \cite{Ho} and 
Uchida \cite{U}.


\begin{thebibliography}{99}

\bibitem{F} B. Ferrero,
{\em The cyclotomic $\Z_2-$extension of imaginary quadratic number 
       fields},  American J. Math. {\bf 102} (1980), 447--459
%

\bibitem{H} H. Hasse,
{\em \"{U}ber die Klassenzahl abelscher Zahlk\"{o}rper},
Springer Verlag 1985
%

\bibitem{Ho} K. Horie
{\em On a ratio between relative class numbers},
Math. Z. {\bf 211} (1992), 505--521
%

\bibitem{HY} M. Hirabayashi), K. Yoshino
{\em Remarks on unit indices of imaginary abelian number fields},
Manuscripta Math. {\bf 60} (1988), 423--436
%

\bibitem{L} F. Lemmermeyer,
{\em Kuroda's class number formula},  
Acta Arith. {\bf 66} (1994), 245--260
%

\bibitem{Lou} S. Louboutin,
{\em Determination of all quaternion octic CM-fields with 
       class number 2}, preprint 
%

\bibitem{LO} S. Louboutin, M. Olivier,
{\em The class number one problem for some non-abelian normal
       CM-fields},  preprint 1994
%

\bibitem{M} T. Mets\"{a}nkyl\"{a},
{\em \"{U}ber den ersten Faktor der Klassenzahl des Kreisk\"{o}rpers},
Ann. Acad. Sci. Fenn. Ser. A. I {\bf 416} (1967)
%

\bibitem{MM} J. M. Masley, H. L. Montgomery,
{\em Cyclotomic fields with unique factorization},
J. Reine Angew. Math. {\bf 286/287} (1976), 248--256
%

\bibitem{O} R. Okazaki,
{\em On evaluation of L-functions over real quadratic fields},
J. Math. Kyoto Univ. {\bf 31} (1991), 1125--1153
%

\bibitem{S} A. Scholz,
{\em \"Uber die L\"osbarkeit der Gleichung $t^2-Du^2 = -4$}, 
Math. Z. {\bf 39} (1934), 95--111
%

\bibitem{U} K. Uchida,
{\em Imaginary quadratic number fields with class number one}, 
Tohoku Math. Journ. {\bf 24} (1972), 487--499
%

\bibitem{W} L. Washington,
{\em Introduction to Cyclotomic Fields},
Springer Verlag 1982
%

\bibitem{We} J. Westlund,
{\em On the class number of the cyclotomic number field},
Trans. Amer. Math. Soc. {\bf 4} (1903), 201--212
%

\end{thebibliography}
\end{document}